\newcommand{\orcid}[1]{\,\href{https://orcid.org/#1}{\includegraphics[width=8pt]{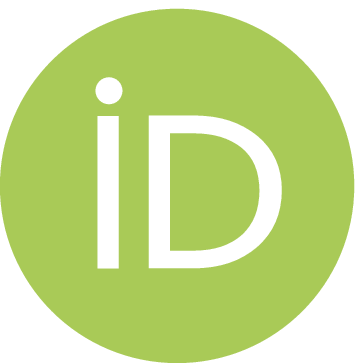}}}
\newcommand{\A}{\mathcal{A}}
\newcommand{\B}{\mathcal{B}}
\newcommand{\C}{\mathcal{C}}
\newcommand{\D}{\mathcal{D}}
\newcommand{\E}{\mathcal{E}}
\newcommand{\F}{\mathcal{F}}
\newcommand{\I}{\mathcal{I}}
\newcommand{\J}{\mathcal{J}}
\newcommand{\K}{\mathcal{K}}
\renewcommand{\P}{\mathcal{P}}
\newcommand{\Q}{\mathcal{Q}}
\newcommand{\R}{\mathcal{R}}
\renewcommand{\S}{\mathcal{S}}
\newcommand{\U}{\mathcal{U}}
\newtheorem{theorem}{Theorem}
\newtheorem{lemma}{Lemma}
\newtheorem{proposition}{Proposition}
\title{Vertebrate interval graphs}
\author{Rain Jiang\orcid{0000-0002-0144-942X}\qquad
Kai Jiang\orcid{0000-0001-8165-0571}\qquad
Minghui Jiang\orcid{0000-0003-1843-9292}\,\thanks{\texttt{ dr.minghui.jiang at gmail.com}}\medskip\\
Home School, USA}
\date{}
\begin{document}

\maketitle

\begin{abstract}
A vertebrate interval graph is an interval graph in which
the maximum size of a set of independent vertices equals the number of maximal cliques.
For any fixed $v \ge 1$,
there is a polynomial-time algorithm for deciding
whether a vertebrate interval graph admits a vertex partition into
two induced subgraphs with claw number at most $v$.
In particular, when $v = 2$,
whether a vertebrate interval graph can be partitioned
into two proper interval graphs
can be decided in polynomial time.
\end{abstract}

\section{Introduction}

In any graph, the maximum size of a set of independent vertices is at most
the number of maximal cliques.
It is natural to consider graphs with the property that these two numbers are equal.
Golumbic~\cite{Go78} noted that
``we cannot expect to discover much about
the structure'' of graphs with this property alone,
and instead defined \emph{trivially perfect graphs}
as graphs with the stronger, hereditary property that
the maximum size of a set of independent vertices
equals the number of maximal cliques
in every induced subgraph.

Trivially perfect graphs are also known as comparability graphs of trees,
and as intersection graphs of nested intervals.
An \emph{interval graph} is the intersection graph of a family of open
intervals.
A trivially perfect graph is an interval graph
with an interval representation in which no interval partially overlaps
another interval.

\begin{figure}[htbp]
\centering\includegraphics{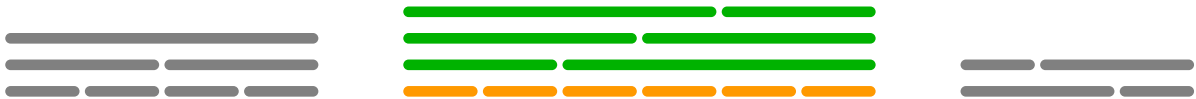}
\caption{Left: intervals representing a trivially perfect graph.
Middle: intervals representing a vertebrate interval graph.
Right: intervals representing an invertebrate interval graph.}
\label{fig:example}
\end{figure}

In this paper, we introduce \emph{vertebrate interval graphs}
as a subclass of interval graphs
and a superclass of trivially perfect graphs.
Specifically,
an interval graph is
\emph{vertebrate}
if the maximum size of a set of independent vertices equals the number of maximal cliques
in the graph,
and is \emph{invertebrate} otherwise.
Here the equality of the two numbers for a vertebrate interval graph is not hereditary:
it holds for the graph as a whole,
but not necessarily for its induced subgraphs.
Refer to Figure~\ref{fig:example} for some examples.

The \emph{claw number} $\psi(G)$ of a graph $G$ is the largest number
$v \ge 0$ such that $G$ contains $K_{1,v}$ as an induced subgraph~\cite{AC10}.
In particular, a graph with claw number $0$ is an empty graph with no edges.
A graph with claw number at most $1$ is a disjoint union of cliques,
also known as a \emph{cluster graph}.
Every cluster graph is clearly also an interval graph,
with claw number at most $1$.
A \emph{proper interval graph} is the intersection graph of a family of open
intervals in which no one properly contains another.
A \emph{unit interval graph} is the intersection graph of a family of open
intervals of the same length.
It is well-known~\cite{BW99}
that an interval graph is a proper interval graph if and only if it is a unit
interval graph, and if and only if it does not contain $K_{1,3}$ as an induced subgraph.
Thus a proper\,/\,unit interval graph is an interval graph of claw number at most~$2$.

A \emph{$k$-partition} of a graph refers to a partition of its vertices into $k$ subsets,
and the resulting $k$ vertex-disjoint induced subgraphs.
For $k \ge 2$ and $v \ge 1$,
\textsc{$k$-Partition$(v)$} is the problem of
deciding whether a given graph admits a vertex partition
into $k$ induced subgraphs with claw number at most $v$.
Certain generic results on vertex-partitioning~\cite{Ac97,Fa04}
imply that
\textsc{$k$-Partition$(v)$} in general graphs
is NP-hard for all $k \ge 2$ and $v \ge 1$.

The subchromatic number of a graph
is the smallest number $k$ such that
the graph admits a $k$-partition into cluster graphs~\cite{AJHL89}.
For any fixed $k$,
whether an interval graph with $n$ vertices has subchromatic number at most $k$
can be decided in $O(k\cdot n^{2 k+1})$ time~\cite[Theorem~5.4]{BFNW02}.
Thus for $k \ge 2$,
\textsc{$k$-Partition$(1)$} in interval graphs
admits an algorithm running in $O(k\cdot n^{2 k+1})$ time.
It is unknown~\cite{JJJ21}
whether there exist $k \ge 2$ and $v \ge 2$ such that
\textsc{$k$-Partition$(v)$} in interval graphs
admits an polynomial-time algorithm.

In this paper,
we show that vertebrate interval graphs have sufficient ``structure'' so that
\textsc{$2$-Partition$(v)$} can be solved in polynomial time for all $v \ge 1$:

\begin{theorem}\label{thm:2partition}
For $v \ge 1$,
\textsc{$2$-Partition$(v)$} in vertebrate interval graphs
admits an algorithm running in $2^{O(v^2)}n^{O(v)}$ time.
\end{theorem}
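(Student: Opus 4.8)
The plan is to exploit the vertebrate hypothesis to extract a canonical left-to-right skeleton and then sweep along it with a dynamic program. Order the maximal cliques $C_1,\dots,C_m$ by their clique points $p_1<\dots<p_m$ on the line, so that each vertex $u$ occupies a consecutive run of cliques $C_{l_u},\dots,C_{r_u}$, its \emph{span}. Since $\alpha(G)$ equals the number $m$ of maximal cliques, a maximum independent set meets each maximal clique in exactly one vertex; I would first prove the structural lemma that these \emph{vertebrae} $x_1,\dots,x_m$, indexed so that $x_i\in C_i$, are pairwise disjoint intervals of span exactly one, i.e. $I_{x_i}$ meets $p_i$ but neither $p_{i-1}$ nor $p_{i+1}$ (if $x_i$ met $p_{i+1}$ it would be adjacent to $x_{i+1}$). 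Two consequences drive everything: every vertex $u$ is adjacent to all of $x_{l_u},\dots,x_{r_u}$, so $G$ already contains a $K_{1,\,r_u-l_u+1}$ centered at $u$; and for any induced subgraph $H$, the condition $\psi(H)\le v$ is equivalent to requiring that, for every vertex $u$ of $H$, the intervals of $H$ meeting $I_u$ contain no $v+1$ pairwise disjoint members.

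Before specifying the algorithm I would record how the two constraints $\psi(A)\le v$ and $\psi(B)\le v$ interact with a left-to-right sweep over $p_1,\dots,p_m$. Call an interval \emph{active} at the boundary between $C_i$ and $C_{i+1}$ if its span contains both $i$ and $i+1$; the active intervals of a fixed part pairwise intersect, since they all lie in $C_i$, and hence form a clique, so at most one of them can belong to any pairwise disjoint family. The decisive observation is an asymmetry: a claw \emph{centered at a not-yet-processed vertex} $w$ can use at most one leaf from the already-processed side, because any processed same-part neighbor of $w$ must be active at the sweep front and those form a clique, so all remaining leaves of such a claw lie strictly to the right of $w$. Consequently, the only claws whose formation genuinely depends on the past are those centered at an already-processed (active or finished) vertex $u$, whose neighborhood independence accumulates disjoint leaves from both sides as the sweep advances.

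This suggests a dynamic program that sweeps the cliques and, at each boundary, assigns the intervals newly appearing in $C_{i+1}$ to $A$ or $B$. For each part the state stores a bounded \emph{frontier summary}: a set of at most $O(v)$ tracked active centers, each recorded by its right endpoint (one of $\le n$ choices) together with the number $d(u)\in\{0,\dots,v\}$ of pairwise disjoint same-part leaves already attached to $u$ from the left, plus the mutual \emph{interaction pattern} of these tracked intervals (which pairs are already disjoint and the relative order of their right reaches). Enumerating the identities of the $O(v)$ tracked centers over both parts costs $n^{O(v)}$, while the interaction pattern, a relation on $O(v)$ objects, costs $2^{O(v^2)}$; a transition assigns the newly appearing intervals, updates the counts $d(u)$ and the pattern, and rejects any branch that would drive some $d(u)$ to $v+1$. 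By the asymmetry above, future-centered claws are controlled by a single bit per part (whether an active same-part interval currently covers the front), so they add nothing to the state.

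The main obstacle, and the step I expect to require the most care, is proving that a frontier summary of size $O(v)$ per part suffices, i.e. that discarding all but $O(v)$ active centers never loosens a future constraint. Here the spine is the essential gauge: for an active center $u$, the disjoint leaves it can still acquire on the right are, up to a bounded factor, counted by the vertebrae $x_{i+1},\dots,x_{r_u}$ lying in $u$'s part, so $u$ is ``near-critical'' only once $d(u)$ is within a constant of its maximum possible right contribution. I would argue by an exchange and domination argument that among active centers of comparable right reach it suffices to retain the few most constrained ones, and that once more than $O(v)$ centers are simultaneously near-critical, the vertebrae they share already force a $K_{1,v+1}$ in one part regardless of the remaining choices, so such states may be rejected outright. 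Establishing this domination cleanly, so that the retained $O(v)$ centers certify feasibility exactly, is the technical heart; the remaining verification that the transitions preserve the invariant and that accepting states correspond precisely to valid $2$-partitions is then routine, and the running time is the claimed $2^{O(v^2)}n^{O(v)}$.
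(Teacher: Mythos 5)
Your high-level architecture does match the paper's: extract the unit-interval backbone guaranteed by the vertebrate hypothesis, sweep left to right, and keep a per-part summary of size $O(v)$ (giving $n^{O(v)}$ choices) together with a $2^{O(v^2)}$-size combinatorial component. But there is a genuine gap exactly where you yourself locate ``the technical heart,'' and the sketch you offer for it does not hold up. Two concrete problems. First, your per-center count $d(u)$ (disjoint same-part leaves accumulated from the left) is not legitimate dynamic-programming state at an arbitrary sweep front: for a center $u$ in the part that does \emph{not} contain the backbone at the front, a maximum disjoint family of same-part leaves of $u$ need not split into an independently optimized left family plus right family --- a single same-part leaf crossing the front can contribute an extra $+1$ while simultaneously constraining both sides, so ``left count plus future count'' can undercount the claw number by one, and the algorithm could accept a partition containing a $K_{1,v+1}$. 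Additivity of these counts is precisely the paper's Lemma~\ref{lem:cut}, and it requires the two backbone intervals $(s-1,s)$ and $(s,s+1)$ to lie in \emph{different} parts, so that a crossing interval of either part can be swapped for a backbone interval; accordingly, the paper's recurrence (Lemma~\ref{lem:dp}) jumps from one backbone switch point to the next, normalizing the interior of each maximal vertebrate range by Lemma~\ref{lem:basic}, rather than advancing clique by clique as you propose. Second, your rejection rule --- more than $O(v)$ simultaneously near-critical centers force a $K_{1,v+1}$ regardless of future choices --- is unjustified and appears false as stated: many pairwise-intersecting long intervals in one part can each be near-critical without any claw being forced, since their accumulated leaves need not be disjoint from one another, and the future may simply assign everything else to the other part.

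What replaces your domination/exchange argument in the paper is a monotonicity observation that you gesture at but never exploit: for a fixed part $\P$ and front $s$, the left accumulation $\alpha(\P,a,s)$ is a non-increasing step function of $a$, and once capped at $v+1$ it has at most $v+2$ breakpoints. Recording these breakpoints (the paper's $s$-monotonic sequences, Lemma~\ref{lem:alpha}) summarizes the constraints imposed by \emph{all} active centers of a part at once, so no selection of representative centers is needed, and correctness becomes an exact two-way recurrence rather than a lossy compression argument. The $2^{O(v^2)}$ factor also arises differently: not from an interaction pattern on tracked centers, but from the fact that any good $2$-partition must be group-conforming (Lemma~\ref{lem:group}) and each cut point meets only $O(v^2)$ groups (Lemma~\ref{lem:pierce}), which bounds the assignments of \emph{all} crossing intervals, not just $O(v)$ of them. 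As it stands, your proposal is an outline of the right kind of algorithm, but the statement you would need --- that the $O(v)$-center frontier summary certifies feasibility exactly --- is missing, and in the form sketched it is incorrect.
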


In particular, for $v = 2$,
whether a vertebrate interval graph
admits a vertex partition into two proper interval graphs
can be decided in polynomial time.

\section{Preliminaries}

Denote by $m(G)$ the number of maximal cliques in a graph $G$.
Besides $m(G)$ and the claw number $\psi(G)$ mentioned earlier,
there are two other common graph parameters.
An \emph{independent set} (respectively, a \emph{clique})
is a set of pairwise non-adjacent (respectively, adjacent)
vertices in a graph.
The \emph{independence number}
$\alpha(G)$ is the maximum number of vertices in an independent set
in $G$.
The \emph{vertex-clique-partition number}
$\vartheta(G)$ is the minimum number of parts
in a vertex partition of $G$ into cliques.

It is easy to see that for any graph $G$,
$\psi(G) \le \alpha(G) \le \vartheta(G) \le m(G)$.
If $G$ is an interval graph,
then
$\alpha(G) = \vartheta(G)$
\cite{Go04}.
By our definition,
$\alpha(G) = \vartheta(G) = m(G)$
for any vertebrate interval graph $G$.

The equality
$\alpha(G) = \vartheta(G)$ for every interval graph $G$
has a simple constructive proof.
Recall that interval graphs can be recognized in linear time~\cite{Go04}.
Let $\I$ be any family of open intervals whose intersection graph is $G$.
Assume without loss of generality that all endpoints of intervals in $\I$ are integers.
We can find an independent set and a vertex clique partition at the same time
by a standard \emph{sweepline algorithm}:
\begin{quote}
Initialize $\I' \gets \I$ and $i \gets 1$.
While $\I'$ is not empty,
let $T_i = (l_i, r_i)$ be an interval in $\I'$ whose right endpoint $r_i$ is
minimum,
let $\I'_i$ (respectively, $\I_i$) be the subfamily of intervals in
$\I'$ (respectively, $\I$) that contain
the subinterval $S_i = (r_i - 1, r_i)$ of $T_i$,
then update $\I' \gets \I' \setminus \I'_i$ and $i \gets i + 1$.
\end{quote}

Let $m$ be the maximum round $i$ such that $\I'$ is not empty.
Then
$S_i \subseteq T_i \in \I'_i \subseteq \I_i$
for $1 \le i \le m$.
Note that the $m$ intervals $T_i$ correspond to an independent set in $G$,
and that the $m$ subfamilies $\I'_i$ correspond to a partition of the vertices of $G$
into cliques.
So we have $\alpha(G) = \vartheta(G) = m$.
Moreover,
if $G$ is a vertebrate interval graph,
then $\alpha(G) = \vartheta(G) = m(G) = m$,
and the $m$ subfamilies $\I_i$
correspond to the $m$ maximal cliques in $G$,
which cover all vertices and all edges.

After running the sweepline algorithm on
an arbitrary interval representation $\I$ of a vertebrate interval graph $G$,
we can then obtain a possibly more compact interval representation $\J$ of $G$,
by converting each interval $I \in \I$ to an interval $J \in \J$ as follows.
Suppose that $a$ and $b$, respectively,
are the smallest and largest integers $i$, $1 \le i \le m$,
such that $I \in \I_i$.
Then $I \in \I_i$ if and only if $a \le i \le b$.
Let $J = (a - 1, b)$.

Note that all intervals in $\J$ have integer endpoints in $[0,m]$.
Moreover,
for each interval $I \in \I$,
the length of the corresponding interval $J \in \J$
is equal to the number of maximal cliques $\I_i$ such that $I \in \I_i$.
In particular,
each interval $T_i \in \I$ corresponds to a unit interval $T'_i = (i-1,i)$ in $\J$,
and each interval $J \in \J$ of length $\ell \ge 2$ is the concatenation of $\ell$
disjoint unit intervals $T'_i$ in $\J$.
It is easy to verify that $\I$ and $\J$ represent the same graph $G$.

Let $\U$ be the subfamily of $m$ unit intervals $T'_i$ in $\J$.
We call $\J$ a \emph{vertebrate representation} of $G$,
and call $\U$ the \emph{backbone} of $\J$.
Because of the unit intervals composing the backbone,
the maximum length of an interval in $\J$ is exactly $\psi(G)$.
In summary, we have the following proposition:

\begin{proposition}
Every vertebrate interval graph $G$ has an interval representation $\J$
with integer endpoints in $[0,m(G)]$ and with maximum interval length $\psi(G)$,
which includes the $m(G)$ unit intervals $(i-1,i)$ for $1 \le i \le m(G)$.
\end{proposition}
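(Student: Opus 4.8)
Essentially all of the data for $\J$ is already produced by the sweepline, so the plan is to verify, in turn, the four properties bundled into the statement: that $I \mapsto J$ is well defined with integer endpoints in $[0,m(G)]$, that the $m(G)$ unit intervals $(i-1,i)$ appear, that $\J$ represents the same graph $G$, and that the longest interval of $\J$ has length $\psi(G)$. Everything hinges on one lemma about the sweepline, which I would prove first: the chosen right endpoints are \emph{strictly} increasing, $r_1 < r_2 < \cdots < r_m$. Indeed, for $j>i$ the interval $T_j$ is still present at round $i$, so if $r_j = r_i$ then $S_j = S_i \subseteq T_j$ would force $T_j \in \I'_i$ and $T_j$ would have been discarded at round $i$, a contradiction; integrality then gives $r_j \ge r_i + 1$.

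Two facts drop out of this lemma. First, the cells $S_1,\dots,S_m$ are pairwise disjoint. Second, for a fixed interval $I = (l_I, r_I)$ the condition $I \in \I_i \iff S_i \subseteq I \iff l_I + 1 \le r_i \le r_I$ selects a \emph{contiguous} block of indices, because $i \mapsto r_i$ is strictly monotone; this is exactly what makes $J = (a-1,b)$ well defined, with integer endpoints in $[0,m]$ (the block is nonempty since every vertex lies in some clique $\I_i$). For the backbone I would check that $\{j : T_i \in \I_j\} = \{i\}$: for $j<i$ the interval $T_i$ survived round $j$, so $S_j \not\subseteq T_i$; for $j>i$ strict monotonicity gives $r_j > r_i$, so again $S_j \not\subseteq T_i$; while $S_i \subseteq T_i$ gives $T_i \in \I_i$. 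Hence $T_i \mapsto (i-1,i)$, and all $m(G)$ backbone units lie in $\J$.

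The graph-preservation step is the one I expect to be the main obstacle, and it is the only place the vertebrate hypothesis enters. Writing $J=(a-1,b)$ and $J'=(a'-1,b')$, the two open intervals meet iff the integer blocks $[a,b]$ and $[a',b']$ meet, i.e.\ iff $I$ and $I'$ share some clique $\I_i$. If they share $\I_i$ they share the cell $S_i$, so $I \cap I' \ne \emptyset$. The converse is where vertebrateness does the work: if $I \cap I' \ne \emptyset$ then $\{I,I'\}$ is an edge, and I would invoke the fact established just above that for a vertebrate graph the $\I_i$ are precisely the $m(G)$ maximal cliques and so cover every edge, placing both $I$ and $I'$ in a common $\I_i$. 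Thus adjacency agrees in $\J$ and in $\I$, and $\J$ represents $G$.

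For the length I would bound against $\psi(G)$ from both sides. If $J$ has length $\ell \ge 2$, then $J \supseteq (i-1,i)$ for each $a \le i \le b$, so its vertex is adjacent to the pairwise non-adjacent vertices $T_a,\dots,T_b$ and, being non-unit, differs from each; this induced $K_{1,\ell}$ forces $\ell \le \psi(G)$. Conversely, from a maximum induced $K_{1,\psi(G)}$ with centre interval $J_C$ and pairwise-disjoint leaves, each leaf meets $J_C$ in a nonempty open interval with integer endpoints, hence of length at least $1$; these meet-intervals are disjoint inside $J_C$, so $J_C$ has length at least $\psi(G)$. Combining the bounds gives maximum length $\psi(G)$ (reading the statement for $\psi(G) \ge 1$; when $G$ is edgeless every interval is a backbone unit, the only degenerate case). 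I would stress that the strict-monotonicity lemma is the linchpin, since contiguity, the backbone identity, and disjointness of the cells all fail without it.
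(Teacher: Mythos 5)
Most of what you prove is correct, and it is exactly the detail work the paper skips: the strict monotonicity $r_1 < r_2 < \cdots < r_m$, the disjointness of the cells $S_i$, the contiguity of the index block $\{i : I \in \I_i\}$, the backbone identity $\{j : T_i \in \I_j\} = \{i\}$, and both directions of the comparison between maximum interval length and $\psi(G)$ all check out, and your route is the paper's route (sweepline, then the conversion $I \mapsto J = (a-1,b)$).

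The gap is precisely at the step you yourself call the main obstacle. You say you would ``invoke the fact established just above that for a vertebrate graph the $\I_i$ are precisely the $m(G)$ maximal cliques and so cover every edge.'' Nothing established just above---in your text or in your monotonicity lemma---says this; the backbone identity is far weaker. Worse, the claim is false for the sweepline as defined, so it cannot be supplied later: take $\I = \{(0,2), (1,3), (2,4)\}$, representing the path on three vertices, which is vertebrate since $\alpha = m(G) = 2$. The sweepline picks $T_1 = (0,2)$, $S_1 = (1,2)$, $\I_1 = \{(0,2),(1,3)\}$, then $T_2 = (2,4)$, $S_2 = (3,4)$, $\I_2 = \{(2,4)\}$. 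Here $\I_2$ is not a maximal clique, the edge between $(1,3)$ and $(2,4)$ lies in no $\I_i$, and the conversion outputs $\J = \{(0,1),(0,1),(1,2)\}$, which does not represent $G$. (The paper's own prose asserts the same claim without proof, so the gap is inherited from the source; but a proof of the Proposition has to repair it, not cite it.) The repair uses vertebrateness through a counting argument you never make: the $T_i$ are independent, so each maximal clique contains at most one $T_i$; there are $m(G) = m$ maximal cliques and $m$ vertices $T_i$, each lying in at least one maximal clique; hence each $T_i$ lies in exactly one maximal clique, which must therefore equal the set of all intervals of $\I$ that intersect $T_i$ (its closed neighborhood). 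If you redefine $\I_i$ to be that set---the intervals intersecting $T_i$, rather than those containing $S_i$---then the families $\I_i$ are exactly the $m$ maximal cliques, so they cover every edge; contiguity of the index blocks still holds, because the $T_i$ are pairwise disjoint and, by your monotonicity lemma, ordered along the line; and the rest of your argument (backbone, adjacency in both directions, the two length bounds) then goes through unchanged.
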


\section{Algorithm for \textsc{$2$-Partition$(v)$} in vertebrate interval graphs}

In this section we prove Theorem~\ref{thm:2partition}.
Fix any $v \ge 1$.
We will present a polynomial-time algorithm for \textsc{$2$-Partition$(v)$}
based on dynamic programming.

Let $G$ be a vertebrate interval graph with $n$ vertices.
Obtain a vertebrate representation $\J$ of $G$
with integer endpoints in $[0,m]$,
where $m = m(G)$ is the number of maximal cliques in $G$.
Let $\U \subseteq \J$ be the subfamily of $m$ disjoint unit intervals
$(i-1,i)$, $1 \le i \le m$, forming the backbone.

We say that a family of intervals is \emph{good}
if it represents an interval graph with claw number at most $v$,
and that a $2$-partition of a family of intervals is \emph{good}
if each part is good.
For any two subfamilies $\R$ and $\S$ of $\J$,
we write
$\R\mid\S$ if each interval in $\R$
intersects at most $v$ disjoint intervals in $\S$ other than itself.
Then a $2$-partition $(\P,\Q)$ of $\J$ is good if and only if
$\P\mid\P$ and $\Q\mid\Q$.

For any good $2$-partition of $\J$,
if there are two duplicate intervals in different parts,
then we can always relocate them to the same part
while keeping the $2$-partition good.
Thus we can assume without loss of generality
that $\J$ includes no duplicates.

\subsection{Basic properties of good $2$-partitions}

For any subfamily $\S$ of $\J$,
denote by $\S_\le$ the subfamily of intervals in $\S$ of length at most $v$,
and by $\S_>$ the subfamily of intervals in $\S$ of length greater than $v$.
For $0 \le l \le r \le m$,
denote by $\S(l,r)$ the subfamily of intervals in $\S$
that are contained in the interval $(l,r)$.
In particular, when $l = r$, both $(l,r)$ and $\S(l,r)$ are empty.
We write $(\P,\Q) \succ \S$
if $(\P,\Q)$ is a $2$-partition of $\S$.
We say that $(\P,\Q) \succ \S$ is \emph{simple}
if either $(\P,\Q) = (\S_\le,\S_>)$ or
$(\P,\Q) = (\S_>,\S_\le)$.

For any two subfamilies $\R$ and $\S$ of $\J$,
we say that two $2$-partitions
$(\R_1,\R_2) \succ \R$
and
$(\S_1,\S_2) \succ \S$
are \emph{consistent}
if $(\R_1 \cup \S_1, \R_2 \cup \S_2) \succ \R \cup \S$,
and write
$(\R_1,\R_2) \simeq (\S_1,\S_2)$.
For $\R \subseteq \S$,
and for any $2$-partition $(\P,\Q) \succ \S$,
the $2$-partition $(\P\cap\R,\Q\cap\R) \succ \R$
is called the \emph{derived} $2$-partition of $\R$.
Clearly, if $(\P',\Q')$ is derived from $(\P,\Q)$,
then $(\P',\Q') \simeq (\P,\Q)$.

For $0 \le l < r \le s \le m$,
the interval $(l,r)$ is a \emph{vertebrate range}
for some $2$-partition of $\J(0,s)$
if the $r-l$ unit intervals in $\U(l,r)$
are all in the same part.
A $2$-partition of $\J(0,s)$ is \emph{basic} if
for every maximal vertebrate range $(l,r)$,
the derived $2$-partition of $\J(l,r)$ is simple.

\begin{lemma}\label{lem:basic}
$\J$ has a good $2$-partition if and only if
it has a good basic $2$-partition.
\end{lemma}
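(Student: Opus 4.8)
The forward direction is immediate, since a good basic $2$-partition is in particular a good $2$-partition. For the converse, I would start from an arbitrary good $2$-partition $(\P,\Q)$ of $\J$ and \emph{normalize} it range by range: leaving the backbone $\U$ and every interval that crosses a boundary between two maximal vertebrate ranges exactly where it is, I would replace, inside each maximal vertebrate range $(l,r)$, the derived $2$-partition of $\J(l,r)$ by the simple one that places all intervals of length at most $v$ together with the backbone and all intervals of length greater than $v$ in the opposite part. Since the backbone intervals are never moved, the set of maximal vertebrate ranges is unchanged, so this produces a well-defined $2$-partition $(\P',\Q')$ that is basic by construction; the whole burden is then to show that it is still good.

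The plan rests on two structural observations. First, within a maximal vertebrate range $(l,r)$ whose backbone lies in $\P$, any interval of $\J(l,r)$ of length greater than $v$ already contains more than $v$ of the pairwise disjoint backbone unit intervals of $\U(l,r)$, all of which lie in $\P$; were it also placed in $\P$ it would violate $\P\mid\P$. Hence in the original partition every long interval contained in a range is forced into the part opposite the backbone, exactly where the simple partition puts it. Consequently the normalization relocates only \emph{short} contained intervals, and the placement of the backbone and of every interval of length greater than $v$ is identical in $(\P,\Q)$ and $(\P',\Q')$. Second, because all endpoints are integers, an interval of length $\ell$ can intersect at most $\ell$ pairwise disjoint intervals: the traces of such disjoint intervals on the center are pairwise disjoint nonempty open subintervals with integer endpoints, and at most $\ell$ of those fit. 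In particular the center of any induced $K_{1,v+1}$ must have length greater than $v$.

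The crux, and the step I expect to be the main obstacle, is verifying $\P'\mid\P'$ (the argument for $\Q'$ being symmetric). Suppose for contradiction that $\P'$ contains an induced $K_{1,v+1}$ with center $C$ and pairwise disjoint leaves $\lambda_1,\dots,\lambda_{v+1}$. By the length bound $C$ is long, hence was never moved and already lies in $\P$. For each leaf $\lambda_i$ that was moved into $\P'$ (necessarily a short interval sitting inside a range whose backbone is in $\P$), the nonempty intersection $\lambda_i\cap C$ contains a backbone unit interval $\beta_i\subseteq\lambda_i$ of that range, which therefore already lies in $\P$; each leaf that was not moved I keep unchanged, setting $\beta_i=\lambda_i$. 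In every case $\beta_i\subseteq\lambda_i$ lies in $\P$, meets $C$, and differs from $C$, and since the $\lambda_i$ are pairwise disjoint so are the $\beta_i$. This exhibits $v+1$ pairwise disjoint intervals of $\P$ all meeting $C$, contradicting $\P\mid\P$ for the original good partition. The delicate point is precisely that $C$ may be a long interval \emph{crossing} several ranges, so that genuinely new short leaves can appear in $\P'$ near $C$; replacing each such leaf by a backbone interval it shares with $C$ is exactly what converts the hypothetical oversized claw back into one already present in the original partition.
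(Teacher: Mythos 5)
Your proof is correct and follows essentially the same approach as the paper: within each maximal vertebrate range, long intervals are already forced opposite the backbone, short intervals are moved to the backbone's side, and any alleged new claw is refuted by noting that a short interval cannot be a claw center and that a moved leaf can be replaced by a backbone unit interval inside its intersection with the center. The only difference is presentational—the paper argues that each individual move preserves goodness, while you perform all moves at once and map any claw in the result back to one in the original partition—but the key ideas are identical.
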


\begin{proof}
Given any good $2$-partition of $\J$ that is not basic,
we can transform it into a good basic $2$-partition as follows.
Let $(l,r)$ be a maximal vertebrate range,
such that the $r-l$ unit intervals in $\J(l,r)$ are in the same part.
Then the intervals of length greater than $v$ in $\J(l,r)$,
if any, must be in the other part.
For each interval $I$ of length at most $v$ in $\J(l,r)$,
we simply move it to the same part as the unit intervals
if it is not already there.
We claim that this move does not introduce any star $K_{1,v+1}$.
First,
$I$ cannot be the center of any star $K_{1,v+1}$,
since any interval of length at most $v$ intersects at most $v$ disjoint
intervals in $\J$.
Second, if there is a star $K_{1,v+1}$ with $I$ as a leaf after the move,
then there would have been a star $K_{1,v+1}$ even before the move,
with $I$ replaced by some unit interval contained in $I$.
\end{proof}

For any subfamily $\S$ of $\J$,
and for $0 \le l \le r \le m$,
denote by $\alpha(\S,l,r)$
the maximum size of a subfamily of disjoint intervals in $\S$
that intersect the interval $(l,r)$.
In particular, when $l = r$, $(l,r)$ is empty,
and hence $\alpha(\S,l,r) = 0$.
Note that
$\alpha(\S,l,r)$
is not necessarily equal to
$\alpha(\S(l,r),l,r)$.

\begin{lemma}\label{lem:cut}
For $0 \le a < s < b \le m$,
if $(\P,\Q)$ is a $2$-partition of $\J$ with
$(s-1,s)\in\P$ and $(s,s+1)\in\Q$,
then $\alpha(\P,a,b) = \alpha(\P(0,s),a,s) + \alpha(\P(s,m),s,b)$
and $\alpha(\Q,a,b) = \alpha(\Q(0,s),a,s) + \alpha(\Q(s,m),s,b)$.
\end{lemma}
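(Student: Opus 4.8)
The plan is to prove each identity as a pair of opposite inequalities, and to treat only the statement for $\P$ in detail, obtaining the one for $\Q$ by interchanging the roles of the two backbone unit intervals at $s$. Throughout I would use that every interval of $\J$ has integer endpoints, so an interval $(l',r')$ straddles the integer point $s$ (that is, $s \in (l',r')$) if and only if $l' \le s-1$ and $r' \ge s+1$. Note also that $s$ is itself an integer with $1 \le s \le m-1$, since the backbone intervals $(s-1,s)$ and $(s,s+1)$ lie in $\U$.

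For the inequality $\alpha(\P,a,b) \ge \alpha(\P(0,s),a,s) + \alpha(\P(s,m),s,b)$ I would take a maximum disjoint family $D_1$ of intervals in $\P(0,s)$ meeting $(a,s)$ together with a maximum disjoint family $D_2$ of intervals in $\P(s,m)$ meeting $(s,b)$. Every interval of $D_1$ has right endpoint at most $s$ and every interval of $D_2$ has left endpoint at least $s$, so the two families are separated by the point $s$ and their union is again a disjoint family in $\P$. Since $(a,s)\subseteq(a,b)$ and $(s,b)\subseteq(a,b)$, every interval of $D_1\cup D_2$ meets $(a,b)$, which gives the bound.

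The reverse inequality is the substantive direction. I would start from a maximum disjoint family $D$ in $\P$ all of whose intervals meet $(a,b)$, so $|D| = \alpha(\P,a,b)$. Because the intervals of $D$ are pairwise disjoint open intervals, at most one of them can straddle $s$. If such an interval $I_0$ exists, I would replace it by the backbone interval $(s-1,s)$: by hypothesis this lies in $\P$, it is contained in $I_0$ and hence disjoint from the rest of $D$, and it still meets $(a,b)$ because $s > a$ and $s-1 < s < b$. The substitution preserves $|D|$ and leaves no interval straddling $s$.

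With no interval of the (possibly modified) family straddling $s$, every interval $(l',r')$ satisfies either $r' \le s$ or $l' \ge s$, and these cases are mutually exclusive. In the first case the interval is contained in $(0,s)$ and, having right endpoint at most $s$ while meeting $(a,b)$, it meets $(a,s)$; in the second case it is contained in $(s,m)$ and meets $(s,b)$. Thus the family splits into a subfamily counted by $\alpha(\P(0,s),a,s)$ and one counted by $\alpha(\P(s,m),s,b)$, yielding $\alpha(\P,a,b) \le \alpha(\P(0,s),a,s) + \alpha(\P(s,m),s,b)$ and hence the identity for $\P$. The identity for $\Q$ follows by the same argument, replacing a straddling interval instead by $(s,s+1)\in\Q$, which lands in $\Q(s,m)$ and meets $(s,b)$. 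The only delicate point is the single interval that straddles $s$, and it is precisely there that the hypotheses $(s-1,s)\in\P$ and $(s,s+1)\in\Q$ are needed; the remainder is routine endpoint bookkeeping.
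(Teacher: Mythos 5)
Your proof is correct and uses the same key idea as the paper's: in a maximum disjoint family meeting $(a,b)$, at most one interval can straddle $s$, and it can be replaced by the backbone unit interval $(s-1,s)\in\P$ (or $(s,s+1)\in\Q$) so that the family splits cleanly at $s$. The paper states this replacement in two sentences and leaves the rest implicit; you have simply filled in the routine details (the easy $\ge$ direction and the endpoint bookkeeping for the split), so the two arguments are essentially identical.
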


\begin{proof}
Consider any subfamily of disjoint intervals in $\J$ that intersect $(a,b)$,
from the same part $\P$ or $\Q$.
If this subfamily contains an interval $(c,d)$ with $c < s < d$,
then we can replace it by $(s-1,s)$ or $(s,s+1)$.
\end{proof}

For $0 \le s \le m$,
a sequence
$\boldsymbol{r} = \langle r_0, r_1, \ldots, r_v, r_{v+1}, r_{v+2} \rangle$
of $v+3$ integers is \emph{$s$-monotonic} if
$$
-1 = r_{v+2} \le r_{v+1} \le r_v \le \ldots \le r_1 \le r_0 = s,
$$
and moreover, either $-1 = r_{u+1} = r_u$ or $r_{u+1} < r_u$ holds for each $u$,
$0 \le u \le v + 1$.
Denote by $\boldsymbol{0}$ the unique $0$-monotonic sequence
$\langle 0, -1,\ldots, -1 \rangle$.

For $0 \le u \le v + 2$
and $0 \le s \le m$,
and for a subfamily $\R$ of $\J(0,s)$,
let $i(\R,u,s)$ be the maximum index $i\in[0,s]$ such that $u \le \alpha(\R,i,s) \le v+1$,
or $-1$ if such an index $i$ does not exist.
Clearly, we always have $i(\R,0,s) = s$ and $i(\R,v+2,s) = -1$.
Also, $i(\R,u,s) = -1$ for $u > \alpha(\R,0,s)$.
For $1 \le u \le \min\{\alpha(\R,0,s), v+1\}$,
we have $i(\R,u,s)\in[0,s]$ and $\alpha(\R,i(\R,u,s),s) = u$.

For $0 \le s \le m$,
we say that
an $s$-monotonic sequence $\boldsymbol{r}$
\emph{encodes}
a subfamily $\R$ of $\J(0,s)$
if
$r_u = i(\R,u,s)$ for $0 \le u \le v+2$.
By definition,
every subfamily of $\J(0,s)$ is encoded by
an $s$-monotonic sequence.
If $\boldsymbol{r}$ encodes $\R$,
that is, if $r_u = i(\R,u,s)$ for $0 \le u \le v+2$,
then from $\boldsymbol{r}$ we can recover the values of $\alpha(\R,i,s)$
relative to $v+1$
for all $0 \le i \le s$ as follows:
$$
\begin{array}{ll}
\alpha(\R,i,s) = u
&\textrm{when } r_{u+1} < i \le r_u \textrm{ for } 0 \le u \le v,\\
\alpha(\R,i,s) \ge v+1
&\textrm{when } r_{v+2} < i \le r_{v+1}.
\end{array}
$$

For $0 \le i \le s \le m$,
and for an $s$-monotonic sequence $\boldsymbol{r}$,
denote by
$\alpha(\boldsymbol{r}, i, s)$
the maximum index $u \in [0,v+1]$ such that
$i \le r_u$.
Then we immediately have the following lemma:

\begin{lemma}\label{lem:alpha}
For $0 \le s \le m$,
if an $s$-monotonic sequence $\boldsymbol{r}$ encodes a subfamily $\R$ of $\J(0,s)$,
then for $0 \le i \le s$,
either $\alpha(\R,i,s) = \alpha(\boldsymbol{r},i,s) \in[0,v]$,
or $\alpha(\R,i,s) \ge \alpha(\boldsymbol{r},i,s) = v+1$.
\end{lemma}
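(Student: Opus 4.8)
The plan is to obtain the claim directly by matching the definition of $\alpha(\boldsymbol{r},i,s)$ against the decoding rule for $\alpha(\R,i,s)$ displayed just before the lemma, which I may take as given. In effect the lemma is nothing more than a restatement of that decoding rule in the compact notation $\alpha(\boldsymbol{r},i,s)$, so the whole argument is a short unwinding of definitions rather than a genuine computation.

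First I would fix $i \in [0,s]$ and set $U = \alpha(\boldsymbol{r},i,s)$, the largest index $u \in [0,v+1]$ with $i \le r_u$. I would check that $U$ is well-defined: since $i \le s = r_0$, the index $0$ already satisfies $i \le r_0$, so the admissible set $\{u \in [0,v+1] : i \le r_u\}$ is non-empty and bounded above by $v+1$. Because $\boldsymbol{r}$ is non-increasing, the condition $i \le r_u$ holds precisely on a prefix of indices $\{0,1,\ldots,U\}$, so $U$ is characterised by $i \le r_U$ together with $i > r_{U+1}$ whenever $U < v+1$.

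Then I would split into two cases on the value of $U$. If $U \le v$, then $r_{U+1} < i \le r_U$ (and this already forces $r_{U+1} < r_U$, so the range is genuinely non-empty), and the first line of the decoding rule yields $\alpha(\R,i,s) = U$; hence $\alpha(\R,i,s) = \alpha(\boldsymbol{r},i,s) = U \in [0,v]$, the first alternative of the lemma. If instead $U = v+1$, then $i \le r_{v+1}$, and since $i \ge 0 > -1 = r_{v+2}$ we have $r_{v+2} < i \le r_{v+1}$; the second line of the decoding rule then gives $\alpha(\R,i,s) \ge v+1 = \alpha(\boldsymbol{r},i,s)$, the second alternative. This exhausts all cases.

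The one point I would be most careful about — and the only place where anything is at stake — is the boundary behaviour at $U = v+1$. There the encoding deliberately clips every independence value exceeding $v+1$ down to a single saturated symbol, so the decoding can certify only the lower bound $\alpha(\R,i,s) \ge v+1$ and not an exact value. Ensuring that the statement faithfully records this asymmetry (equality throughout the regime $[0,v]$, but merely a lower bound once the value saturates) is the crux; once $U$ is pinned down by the prefix property of the non-increasing sequence $\boldsymbol{r}$, everything else is a direct substitution.
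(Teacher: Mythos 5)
Your proof is correct and takes essentially the same route as the paper, which states the lemma as an immediate consequence of the decoding rule displayed just before it and gives no further argument. Your case split on $U = \alpha(\boldsymbol{r},i,s) \le v$ versus $U = v+1$, justified by the prefix property of the non-increasing sequence $\boldsymbol{r}$, is precisely the unwinding that the paper's ``we immediately have'' leaves to the reader.
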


For $0 \le s \le m$,
we say that a pair of $s$-monotonic sequences $(\boldsymbol{p},\boldsymbol{q})$
\emph{encodes} a $2$-partition $(\P,\Q) \succ \J(0,s)$
if
$\boldsymbol{p}$ encodes $\P$
and
$\boldsymbol{q}$ encodes $\Q$.

For $0 \le s \le m$,
denote by $\K_s$ the subfamily of intervals $(a, b)$ in $\J$ with $a < s < b$.
Note that the $m$ unit intervals in $\U$ are the only intervals in $\J$
that are not included in any $\K_s$.
For $0 \le s' < s \le m$,
$\J(0,s) = \J(0,s')\cup(\K_{s'}\setminus\K_{s})\cup\J(s',s)$.
We say that
a pair of $s$-monotonic sequences $(\boldsymbol{p},\boldsymbol{q})$
\emph{extends}
a pair of $s'$-monotonic sequences $(\boldsymbol{p'},\boldsymbol{q'})$
by a $2$-partition
$(\E,\F) \succ \K_{s'}\setminus\K_{s}$ if,
with the simple $2$-partition $(\C,\D) = (\J_\le(s',s),\J_>(s',s)) \succ \J(s',s)$,
and with
$w = \alpha(\D,s',s)$
and
$w' = \alpha(\F\cup\D,s',s)$,
\begin{align*}
p_u &= \left\{
\begin{array}{ll}
s-u
& \quad\textrm{for } 1 \le u \le \min\{s-s',v+1\},\\
p'_{u-s+s'}
& \quad\textrm{for } s-s' < u \le v+1,
\end{array}
\right.
\\
q_u
&= \left\{
\begin{array}{ll}
i(\F\cup\D,u,s)
& \quad\textrm{for } 1 \le u \le \min\{w',v+1\},\\
q'_{u-w}
& \quad\textrm{for } w' < u \le v+1.
\end{array}
\right.
\end{align*}
Note that
if $(\boldsymbol{p},\boldsymbol{q})$
extends
$(\boldsymbol{p'},\boldsymbol{q'})$
by $(\E,\F)$,
then $(\boldsymbol{p},\boldsymbol{q})$
is uniquely determined by
$(\boldsymbol{p'},\boldsymbol{q'})$
and $(\E,\F)$.

The following lemma shows that basic $2$-partitions
have a sequential structure with respect to maximal vertebrate ranges:

\begin{lemma}\label{lem:ef}
Let $0 \le s' < s \le m$.
Suppose that
\begin{itemize}\setlength\itemsep{0pt}

\item
$(\P,\Q)$ is a basic $2$-partition of $\J(0,s)$
with $(s',s)$ as the rightmost maximal vertebrate range,

\item
$(\C,\D) = (\J_\le(s',s),\J_>(s',s))$
is the simple $2$-partition of $\J(s',s)$ derived from $(\P,\Q)$,

\item
$(\P',\Q')$ is the basic $2$-partition of $\J(0,s')$ derived from $(\P,\Q)$,

\item
$(\E,\F) = (\P\setminus(\P'\cup\C),\Q\setminus(\Q'\cup\D))$
is the $2$-partition of $\K_{s'}\setminus\K_s$ derived from $(\P,\Q)$,

\item
$(\boldsymbol{p},\boldsymbol{q})$
is a pair of $s$-monotonic sequences
encoding
$(\P,\Q)$,

\item
$(\boldsymbol{p'},\boldsymbol{q'})$
is a pair of $s'$-monotonic sequences
encoding
$(\P',\Q')$.

\end{itemize}
Then
$(\boldsymbol{p},\boldsymbol{q})$
extends
$(\boldsymbol{p'},\boldsymbol{q'})$
by $(\E,\F)$.
\end{lemma}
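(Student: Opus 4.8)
The plan is to verify the two displayed formulas for $p_u$ and $q_u$ directly from the definitions $p_u = i(\P,u,s)$ and $q_u = i(\Q,u,s)$, by analyzing how the profiles $\alpha(\P,i,s)$ and $\alpha(\Q,i,s)$ behave as $i$ ranges over $[0,s]$. Everything hinges on splitting these quantities at the cut point $s'$. First I would record two facts about the backbone near $s'$: the unit interval $(s',s'+1)$ lies in $\C\subseteq\P$, since it sits inside the vertebrate range $(s',s)$ and has length one; and the unit interval $(s'-1,s')$ must lie in $\Q$, for otherwise $(s'-1,s)$ would be a vertebrate range, contradicting the maximality of $(s',s)$. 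With $(s',s'+1)\in\P$ and $(s'-1,s')\in\Q$, the form of Lemma~\ref{lem:cut} with the roles of $\P$ and $\Q$ exchanged, applied at $s'$, gives for every $i<s'$ the splittings
\begin{align*}
\alpha(\P,i,s) &= \alpha(\P',i,s') + \alpha(\P(s',m),s',s),\\
\alpha(\Q,i,s) &= \alpha(\Q',i,s') + \alpha(\Q(s',m),s',s),
\end{align*}
where $\P' = \P(0,s')$ and $\Q' = \Q(0,s')$.

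Next I would pin down the four pieces. A counting argument on pairwise disjoint intervals with integer endpoints shows that at most $s-i$ of them can meet the window $(i,s)$; as the $s-i$ backbone units inside $(i,s)$ all belong to $\P$, this yields $\alpha(\P,i,s)=s-i$ for all $s'\le i\le s$, and in particular $\alpha(\P(s',m),s',s)=s-s'$. On the $\Q$ side, $\Q(s',m)=\D$ and $\Q'$ meets no window $(i,s)$ with $i\ge s'$, so $\alpha(\Q,i,s)=\alpha(\F\cup\D,i,s)$ for $i\ge s'$ and $\alpha(\Q(s',m),s',s)=\alpha(\D,s',s)=w$. Substituting, $\alpha(\P,i,s)$ equals $s-i$ on $[s',s]$ and $\alpha(\P',i,s')+(s-s')$ on $[0,s')$, while $\alpha(\Q,i,s)$ equals $\alpha(\F\cup\D,i,s)$ on $[s',s]$ and $\alpha(\Q',i,s')+w$ on $[0,s')$. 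Reading off the maximal index at which each profile is at least $u$ (and at most $v+1$) then gives $p_u=s-u$ for $1\le u\le\min\{s-s',v+1\}$ and $q_u=i(\F\cup\D,u,s)$ for $1\le u\le\min\{w',v+1\}$, after checking that these maximal indices indeed lie in $[s',s]$.

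For the remaining ranges $s-s'<u\le v+1$ and $w'<u\le v+1$ the relevant maximal index lies strictly below $s'$, where the split formulas apply; there $p_u$ becomes the largest $i$ with $u-(s-s')\le\alpha(\P',i,s')\le v+1-(s-s')$, and $q_u$ the largest $i$ with $u-w\le\alpha(\Q',i,s')\le v+1-w$. The one delicate point is that these shifted constraints carry the tighter upper bounds $v+1-(s-s')$ and $v+1-w$, whereas $p'_{u-s+s'}=i(\P',u-s+s',s')$ and $q'_{u-w}=i(\Q',u-w,s')$ use the bound $v+1$. I expect this to be the main obstacle, and I would dispatch it with a monotonicity observation: increasing $i$ by one removes at most one interval from a maximum disjoint family meeting the window, so $\alpha(\R,i,s)$ is non-increasing in unit steps for every family $\R$. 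Consequently, at the maximal index where $\alpha(\P',\cdot,s')$ first drops below $u-(s-s')$ its value equals exactly $u-(s-s')\le v+1-(s-s')$, so the tighter bound is automatically met and the maximal index is unchanged; the same holds for $\Q'$ with $w$ in place of $s-s'$. This identifies $p_u$ with $p'_{u-s+s'}$ and $q_u$ with $q'_{u-w}$, completing the verification that $(\bs p,\bs q)$ extends $(\bs{p'},\bs{q'})$ by $(\E,\F)$. The degenerate case $s'=0$, where $\K_{s'}=\emptyset$, $\E=\F=\emptyset$, and the partition is simple, I would handle separately as a trivial base case.
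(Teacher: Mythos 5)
Your proof is correct and takes essentially the same route as the paper's: both use maximality of the vertebrate range to place $(s'-1,s')\in\Q$ and $(s',s'+1)\in\P$, split the profiles $\alpha(\cdot,i,s)$ at the cut $s'$ via Lemma~\ref{lem:cut}, compute the $[s',s]$ portion directly (the backbone forces $s-i$ for $\P$, and only $\F\cup\D$ is visible for $\Q$), and dispose of $s'=0$ as a separate case. The only difference is presentational: you spell out the unit-step monotonicity of $\alpha$ needed to reconcile the shifted upper bound $v+1-(s-s')$ (respectively $v+1-w$) with the bound $v+1$ in the definition of $i(\cdot,u,s')$, a point the paper absorbs into its earlier unproved observation that $\alpha(\R,i(\R,u,s),s)=u$ for $1\le u\le\min\{\alpha(\R,0,s),v+1\}$.
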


\begin{proof}
Since $\U(s',s) \subseteq \J_\le(s',s) = \C \subseteq \P$,
we must have
$\alpha(\P,s',s) = s-s'$,
and moreover,
$$
\begin{array}{ll}
i(\P,u,s) = s-u
&\textrm{for } 1 \le u \le \min\{s-s',v+1\},\\
i(\P,u,s) < s'
&\textrm{for } s-s' < u \le v+1.
\end{array}
$$

Note that all intervals in
$\Q = \Q'\cup\F\cup\D$
that intersect $(s',s)$ are in
$\F\cup\D$.
Thus
$\alpha(\Q,i,s) = \alpha(\F\cup\D,i,s)$
for $s' \le i \le s$.
Since $\alpha(\F\cup\D,s',s) = w'$,
it follows that
for $u \le \min\{w',v+1\}$,
$i(\F\cup\D,u,s) \ge s'$,
and hence
$i(\Q,u,s) = i(\F\cup\D,u,s)$.
Thus,
$$
\begin{array}{ll}
i(\Q,u,s) = i(\F\cup\D,u,s)
&\textrm{for } 1 \le u \le \min\{w',v+1\},\\
i(\Q,u,s) < s'
&\textrm{for } w' < u \le v+1.
\end{array}
$$

We next elaborate on the two inequalities,
$i(\P,u,s) < s'$
for $s-s' < u \le v+1$,
and
$i(\Q,u,s) < s'$
for $w' < u \le v+1$,
by considering two cases.

Case 1:
$s'=0$.
Then we must have
$i(\P,u,s) = -1$ for $s-s' < u \le v+1$,
and $i(\Q,u,s) = -1$ for $w' < u \le v+1$.
On the other hand,
since
$(0,s')$ is empty,
we have
$\P'=\Q'=\emptyset$.
For $s-s' < u \le v+1$,
we have $u-s+s' > 0$,
and hence $i(\P',u-s+s',s') = -1$.
Also,
for $w' < u \le v+1$,
we have $u-w' > 0$.
Since $w = \alpha(\D,s',s) \le \alpha(\F\cup\D,s',s) = w'$,
it follows that $u-w > 0$,
and hence $i(\Q',u-w,s') = -1$.
Therefore,
$$
\begin{array}{ll}
i(\P,u,s) = -1 = i(\P',u-s+s',s')
&\textrm{for } s-s' < u \le v+1,\\
i(\Q,u,s) = -1 = i(\Q',u-w,s')
&\textrm{for } w' < u \le v+1.
\end{array}
$$

Case 2: $s'>0$.
Since $(s',s)$ is the rightmost maximal vertebrate range of
$(\P,\Q) \succ \J(0,s)$
with $\U(s',s) \subseteq \C \subseteq \P$,
we have $(s'-1,s')\in\Q$ and $(s',s'+1)\in\P$.
For $0 \le i < s' < s \le m$,
it then follows by Lemma~\ref{lem:cut} that
\begin{align*}
\alpha(\P,i,s)
&= \alpha(\P(0,s'),i,s') + \alpha(\P(s',s),s',s)
= \alpha(\P',i,s') + \alpha(\C,s',s),\\
\alpha(\Q,i,s)
&= \alpha(\Q(0,s'),i,s') + \alpha(\Q(s',s),s',s)
= \alpha(\Q',i,s') + \alpha(\D,s',s).
\end{align*}
Since
$\alpha(\C,s',s) = s-s'$
and
$\alpha(\D,s',s) = w$,
we have
\begin{align*}
\alpha(\P,i,s) &= \alpha(\P',i,s') + s-s',\\
\alpha(\Q,i,s) &= \alpha(\Q',i,s') + w,
\end{align*}
which implies that
$$
\begin{array}{ll}
i(\P,u,s) = i(\P',u-s+s',s')
&\textrm{for } s-s' < u \le v+1,\\
i(\Q,u,s) = i(\Q',u-w,s')
&\textrm{for } w < u \le v+1.
\end{array}
$$
In particular, since $w \le w'$, we have
$$
\begin{array}{ll}
i(\P,u,s) = i(\P',u-s+s',s')
&\textrm{for } s-s' < u \le v+1,\\
i(\Q,u,s) = i(\Q',u-w,s')
&\textrm{for } w' < u \le v+1.
\end{array}
$$

Since $(\boldsymbol{p},\boldsymbol{q})$ encodes $(\P,\Q)$
and $(\boldsymbol{p'},\boldsymbol{q'})$ encodes $(\P',\Q')$,
it follows that for $1 \le u \le v+1$,
$$
p_u = i(\P,u,s),
\quad
q_u = i(\Q,u,s),
\quad
p'_u = i(\P',u,s),
\quad
q'_u = i(\Q',u,s).
$$
In summary, we have proved that
\begin{align*}
p_u &= \left\{
\begin{array}{ll}
s-u
& \quad\textrm{for } 1 \le u \le \min\{s-s',v+1\},\\
p'_{u-s+s'}
& \quad\textrm{for } s-s' < u \le v+1,
\end{array}
\right.
\\
q_u &= \left\{
\begin{array}{ll}
i(\F\cup\D,u,s)
& \quad\textrm{for } 1 \le u \le \min\{w',v+1\},\\
q'_{u-w}
& \quad\textrm{for } w' < u \le v+1.
\end{array}
\right.
\end{align*}
Thus
$(\boldsymbol{p},\boldsymbol{q})$
extends
$(\boldsymbol{p'},\boldsymbol{q'})$
by $(\E,\F)$.
\end{proof}

\subsection{Dynamic programming}

We next show how to decide
whether $\J$ admits a good $2$-partition by dynamic programming.
By Lemma~\ref{lem:basic}, it suffices to check
whether $\J$ admits a good basic $2$-partition.

For $0 \le s \le m$,
for each pair of $s$-monotonic sequences $(\boldsymbol{p},\boldsymbol{q})$
and each $2$-partition $(\A,\B)$ of $\K_s$,
denote by
$Y(s, \boldsymbol{p}, \boldsymbol{q}, \A, \B)$
the predicate whether there is a good basic $2$-partition
$(\P,\Q)$ of $\J(0,s)$
such that
$(\boldsymbol{p},\boldsymbol{q})$ encodes $(\P,\Q)$,
$\P\mid\P\cup\A$,
$\Q\mid\Q\cup\B$,
and $(s-1,s)\in\P$ when $s > 0$.
Then,
since $\J(0,m) = \J$ and $\K_m = \emptyset$,
$\J$ admits a good basic $2$-partition if and only if
there exists a pair of $m$-monotonic sequences $(\boldsymbol{p},\boldsymbol{q})$
such that
$Y(m,\boldsymbol{p},\boldsymbol{q},\emptyset,\emptyset) = \mathrm{true}$.

We can compute
$Y(s, \boldsymbol{p}, \boldsymbol{q}, \A, \B)$
by dynamic programming.
For the base case when $s = 0$,
since $\J(0,0)$ and $\K_0$ are both empty,
we have
$Y(0,\boldsymbol{0},\boldsymbol{0},\emptyset,\emptyset) = \mathrm{true}$.
For $1 \le s \le m$,
we can compute
$Y(s,\boldsymbol{p},\boldsymbol{q},\A,\B)$ by the recurrence in the following lemma:

\begin{lemma}\label{lem:dp}
For $1 \le s \le m$,
for any pair of $s$-monotonic sequences $(\boldsymbol{p},\boldsymbol{q})$
and any $2$-partition $(\A,\B)$ of $\K_s$,
$Y(s,\boldsymbol{p},\boldsymbol{q},\A,\B) = \mathrm{true}$ if and only if
for some $s'$, $0 \le s' < s$,
there exist a pair of $s'$-monotonic sequences $(\boldsymbol{p'},\boldsymbol{q'})$
and a $2$-partition $(\A',\B')$ of $\K_{s'}$,
such that the following conditions are satisfied
with $(\C,\D) = (\J_\le(s',s),\J_>(s',s))$:
\begin{enumerate}\setlength\itemsep{0pt}

\item
$Y(s',\boldsymbol{q'},\boldsymbol{p'},\B',\A') = \mathrm{true}$.

\item
$(\A',\B') \simeq (\A,\B)$.

\item
$(\boldsymbol{p},\boldsymbol{q})$
extends
$(\boldsymbol{p'},\boldsymbol{q'})$
by $(\A'\setminus\A,\B'\setminus\B) \succ \K_{s'}\setminus\K_s$.

\item
$\C \mid \A'\cup\C\cup\A$
and
$\D \mid \B'\cup\D\cup\B$.

\item
For each interval $(a,b) \in \K_{s'}\setminus\K_s$,
where $0 \le a < s' < b \le s$,
\begin{itemize}\setlength\itemsep{0pt}
\item
if $(a,b) \in \A'\setminus\A$,
then
$\alpha(\boldsymbol{p'},a,s') + \alpha(\C\cup(\A\setminus\A'),s',b) \le v$,
\item
if $(a,b) \in \B'\setminus\B$,
then
$\alpha(\boldsymbol{q'},a,s') + \alpha(\D\cup(\B\setminus\B'),s',b) \le v$.
\end{itemize}

\end{enumerate}
\end{lemma}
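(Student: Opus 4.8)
The plan is to prove both directions of the biconditional by decomposing a good basic $2$-partition of $\J(0,s)$ at its rightmost maximal vertebrate range, and conversely by reassembling such a partition from a smaller one over the decomposition $\J(0,s)=\J(0,s')\cup(\K_{s'}\setminus\K_s)\cup\J(s',s)$.

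For the forward direction, suppose $Y(s,\boldsymbol{p},\boldsymbol{q},\A,\B)=\mathrm{true}$, witnessed by a good basic $2$-partition $(\P,\Q)$ of $\J(0,s)$. I would take $(s',s)$ to be the rightmost maximal vertebrate range of $(\P,\Q)$; since $(s-1,s)\in\P$ forces $\U(s',s)\subseteq\P$, the derived partition of $\J(s',s)$ is the simple partition $(\C,\D)=(\J_\le(s',s),\J_>(s',s))$ with $\C\subseteq\P$ and $\D\subseteq\Q$. Setting $\A'=\P\cap\K_{s'}$ and $\B'=\Q\cap\K_{s'}$, letting $(\P',\Q')$ be the derived partition of $\J(0,s')$, and letting $(\boldsymbol{p'},\boldsymbol{q'})$ encode $(\P',\Q')$, I would then verify the five conditions. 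Condition~3 is exactly Lemma~\ref{lem:ef}. Condition~2 holds because the intervals of $\K_{s'}\cap\K_s$ are assigned identically by $(\A',\B')$ and $(\A,\B)$, both being restrictions of $(\P,\Q)$. Condition~1 records that $(\P',\Q')$ is itself a good basic $2$-partition of $\J(0,s')$; because $(s',s)$ is \emph{maximal}, the unit interval $(s'-1,s')$ lies in $\Q'$ rather than $\P'$, which forces the role swap $\boldsymbol{p'}\leftrightarrow\boldsymbol{q'}$, $\A'\leftrightarrow\B'$ so that the boundary unit interval again sits in the first part of the recursive state. Conditions~4 and~5 are the local claw conditions, read off by restricting $\P\mid\P\cup\A$ and $\Q\mid\Q\cup\B$ to centers in the vertebrate range and in $\K_{s'}\setminus\K_s$ respectively.

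For the backward direction, I would start from a good basic $2$-partition witnessing the swapped predicate $Y(s',\boldsymbol{q'},\boldsymbol{p'},\B',\A')$, identify its parts, encoded by $\boldsymbol{q'}$ and $\boldsymbol{p'}$, as $\Q'$ and $\P'$, and assemble $\P=\P'\cup(\A'\setminus\A)\cup\C$ and $\Q=\Q'\cup(\B'\setminus\B)\cup\D$. Condition~3 together with Lemma~\ref{lem:ef} guarantees that $(\boldsymbol{p},\boldsymbol{q})$ encodes $(\P,\Q)$; since $(s'-1,s')\in\Q'$ and $(s',s'+1)\in\P'$ lie in different parts, the point $s'$ separates vertebrate ranges, so $(\P,\Q)$ is basic with $(s',s)$ as its rightmost maximal vertebrate range, and $(s-1,s)\in\C\subseteq\P$ supplies the boundary requirement. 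What remains is to recover the two global claw conditions from conditions~4 and~5, which I would do by splitting the disjoint neighbours of each center at the cut point $s'$.

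The main obstacle is precisely this last equivalence between conditions~4 and~5 and the global conditions $\P\mid\P\cup\A$ and $\Q\mid\Q\cup\B$. The argument splits by the location of a center $I\in\P$. If $I\in\P'$, then $I$ has right endpoint at most $s'$ and every disjoint neighbour of $I$ lies in $\P'\cup\A'$, so the bound is supplied recursively by condition~1; if $I\in\C$, then every neighbour lies in $\A'\cup\C\cup\A$, which is condition~4; and if $I=(a,b)\in\A'\setminus\A$ crosses $s'$, then by Lemma~\ref{lem:cut} any disjoint family of neighbours splits at $s'$, with the at most one member that itself crosses $s'$ replaced by the boundary unit interval $(s',s'+1)\in\C$, so that the family size is $\alpha(\boldsymbol{p'},a,s')+\alpha(\C\cup(\A\setminus\A'),s',b)$ as in condition~5. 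The delicate point is this replacement: using the integrality of the endpoints and the placement of the backbone at the cut, one must check that substituting $(s',s'+1)$ preserves both disjointness from the left-hand neighbours and adjacency to $(a,b)$, so that the two-sided count is exact rather than a one-sided estimate (with the encoding of $\boldsymbol{p'}$ correctly saturating at $v+1$ when the left contribution alone already exceeds $v$). The symmetric statements for $\Q$ follow by the same analysis with the two parts exchanged.
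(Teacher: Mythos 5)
Your proposal follows essentially the same route as the paper's proof: split at the rightmost maximal vertebrate range in the forward direction, reassemble $\P=\P'\cup(\A'\setminus\A)\cup\C$ and $\Q=\Q'\cup(\B'\setminus\B)\cup\D$ in the reverse direction, invoke Lemma~\ref{lem:ef} (with uniqueness of the extension) for condition~3, and recover the global claw conditions $\P\mid\P\cup\A$, $\Q\mid\Q\cup\B$ by the same three-way case split on the center's location ($\J(0,s')$, $\J(s',s)$, or crossing $s'$), using conditions~1, 4, and~5 together with Lemmas~\ref{lem:cut} and~\ref{lem:alpha}, including the key saturation point that condition~5 forces $\alpha(\boldsymbol{p'},a,s')\le v$ and hence $\alpha(\P',a,s')=\alpha(\boldsymbol{p'},a,s')$ exactly. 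The only repairs needed are slips, not gaps: in the forward direction the witness must be $\A'=(\P\cup\A)\cap\K_{s'}$ and $\B'=(\Q\cup\B)\cap\K_{s'}$ rather than $\P\cap\K_{s'}$ and $\Q\cap\K_{s'}$ (otherwise $(\A',\B')$ omits the intervals of $\K_{s'}\cap\K_s$ and is not a $2$-partition of $\K_{s'}$, so conditions~1--3 are not even well-posed; your own justification of condition~2 already presupposes the corrected definition), and in the reverse direction the unit interval $(s',s'+1)$ lies in $\C$, not in $\P'$.
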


\begin{proof}
We first prove the direct implication.
Suppose that $Y(s,\boldsymbol{p},\boldsymbol{q},\A,\B) = \mathrm{true}$.
Then there exists
a good basic $2$-partition $(\P,\Q)$ of $\J(0,s)$
such that
$(\boldsymbol{p},\boldsymbol{q})$ encodes $(\P,\Q)$,
$\P \mid \P\cup\A$,
$\Q \mid \Q\cup\B$,
and $(s-1,s)\in\P$.
Since $\J(0,s)\cap\K_s = \emptyset$,
we have $(\P,\Q) \simeq (\A,\B)$,
and hence $(\P\cup\A,\Q\cup\B) \succ \J(0,s)\cup\K_s$.

Let $(s',s)$, $0 \le s' < s$, be the rightmost maximal vertebrate range for $(\P,\Q)$.
Since $(\P,\Q)$ is a good basic $2$-partition of $\J(0,s)$
with $(s-1,s)\in\P$,
and since $(\C,\D) = (\J_\le(s',s),\J_>(s',s)) \succ \J(s',s)$,
we must have $\C = \P(s',s)$ and $\D = \Q(s',s)$.
Let $\P' = \P(0,s')$ and $\Q' = \Q(0,s')$.
Then $(\P',\Q')$ is a good basic $2$-partition of $\J(0,s')$.

Note that $\K_{s'} \subseteq \J(0,s)\cup\K_s$.
Let $\A' = (\P\cup\A)\cap\K_{s'}$
and $\B' = (\Q\cup\B)\cap\K_{s'}$.
Then
$(\A',\B')$ is a $2$-partition of $\K_{s'}$
derived from the $2$-partition $(\P\cup\A,\Q\cup\B)$ of $\J(0,s)\cup\K_s$,
and is consistent with both $(\P,\Q)$ and $(\A,\B)$.
In particular, $(\A',\B') \simeq (\A,\B)$,
satisfying condition~2.
Moreover,
since both $(\P',\Q')$ and $(\C,\D)$ are derived from $(\P,\Q)$,
it follows that
$(\P,\Q)$, $(\P',\Q')$, $(\A,\B)$, $(\A',\B')$, and $(\C,\D)$ are pairwise consistent,
and so are all $2$-partitions derived from their combinations,
for example,
\begin{align*}
(\P'\cup\A',\Q'\cup\B') &\succ \J(0,s')\cup\K_{s'},\\
(\A'\setminus\A,\B'\setminus\B) &\succ \K_{s'}\setminus\K_s,\\
(\A'\cup\C\cup\A,\B'\cup\D\cup\B) &\succ \K_{s'}\cup\J(s',s)\cup\K_s,\\
(\C\cup(\A\setminus\A'),\D\cup(\B\setminus\B')) &\succ \J(s',s)\cup(\K_s\setminus\K_{s'}).
\end{align*}

For condition~1,
recall that
$(\P',\Q')$ is a good basic $2$-partition of $\J(0,s')$,
and is consistent with $(\A',\B') \succ \K_{s'}$.
Let $\boldsymbol{p'}$ and $\boldsymbol{q'}$ be the $s'$-monotonic sequences
encoding $\P'$ and $\Q'$, respectively.
Note that the following $2$-partitions are pairwise consistent:
\begin{gather*}
(\P',\Q') \succ \J(0,s'),\qquad
(\P,\Q) \succ \J(0,s)\\
(\P'\cup\A',\Q'\cup\B') \succ \J(0,s')\cup\K_{s'},\qquad
(\P\cup\A,\Q\cup\B) \succ \J(0,s)\cup\K_{s}.
\end{gather*}
Since $\J(0,s') \subseteq \J(0,s)$
and $\J(0,s')\cup\K_{s'} \subseteq \J(0,s)\cup\K_s$,
we must have
$$
\P' \subseteq \P,\qquad
\Q' \subseteq \Q,\qquad
\P'\cup\A' \subseteq \P\cup\A,\qquad
\Q'\cup\B' \subseteq \Q\cup\B.
$$
Since
$\P \mid \P\cup\A$
and
$\Q \mid \Q\cup\B$,
it follows that
$\P' \mid \P'\cup\A'$
and
$\Q' \mid \Q'\cup\B'$.
Also, by our choice of $s'$, we have either $s' = 0$ or $(s'-1,s')\in\Q'$.
Then $Y(s',\boldsymbol{q'},\boldsymbol{p'},\B',\A') = \mathrm{true}$.

Condition~3 is satisfied
by Lemma~\ref{lem:ef} with $(\E,\F) = (\A'\setminus\A,\B'\setminus\B) \succ \K_{s'}\setminus\K_s$.

For condition~4,
note that the intervals in $\J(0,s)\cup\K_s$ that intersect any interval in $\J(s',s)$
are all in $\K_{s'}\cup\J(s',s)\cup\K_s$.
Note that
the following $2$-partitions
are pairwise consistent:
\begin{gather*}
(\C,\D) \succ \J(s',s),\qquad
(\P,\Q) \succ \J(0,s),\\
(\A'\cup\C\cup\A,\B'\cup\D\cup\B) \succ \K_{s'}\cup\J(s',s)\cup\K_s,\qquad
(\P\cup\A,\Q\cup\B) \succ \J(0,s)\cup\K_s.
\end{gather*}
Since
$\J(s',s) \subseteq \J(0,s)$
and $\K_{s'}\cup\J(s',s)\cup\K_s \subseteq \J(0,s)\cup\K_s$,
we must have
$$
\C \subseteq \P,\qquad
\D \subseteq \Q,\qquad
\A'\cup\C\cup\A \subseteq \P\cup\A,\qquad
\B'\cup\D\cup\B \subseteq \Q\cup\B.
$$
Since
$\P \mid \P\cup\A$
and
$\Q \mid \Q\cup\B$,
it follows that
$\C \mid \A'\cup\C\cup\A$
and
$\D \mid \B'\cup\D\cup\B$.

Finally, proceed to condition~5.
Note that
$$
(\J(0,s)\cup\K_s)\setminus\K_{s'} = \J(0,s')\cup(\J(s',s)\cup(\K_s\setminus\K_{s'})),
$$
where $\J(0,s')$ includes the intervals to the left of $s'$,
and $\J(s',s)\cup(\K_s\setminus\K_{s'})$ includes the intervals to the right of $s'$.
Also note that
$(\P',\Q')\succ\J(0,s')$
and
$(\C\cup(\A\setminus\A'),\D\cup(\B\setminus\B'))\succ\J(s',s)\cup(\K_s\setminus\K_{s'})$.
By our choice of $s'$,
we have
$(s',s'+1) \in \P\cup\A$
and
$(s'-1,s') \in \Q\cup\B$
for the $2$-partition $(\P\cup\A,\Q\cup\B) \succ \J(0,s)\cup\K_s$.

Consider any interval $(a,b) \in \K_{s'}\setminus\K_s \subseteq \J(0,s)\cup\K_s$,
where $0 \le a < s' < b \le s$.
If $(a,b) \in \A'\setminus\A$,
then we have
$\alpha(\P\cup\A,a,b) = \alpha(\P',a,s') + \alpha(\C\cup(\A\setminus\A'),s',b)$
by Lemma~\ref{lem:cut}.
Since $\P \mid \P\cup\A$,
it follows that
$\alpha(\P\cup\A,a,b) \le v$,
and hence
$\alpha(\P',a,s') + \alpha(\C\cup(\A\setminus\A'),s',b) \le v$.
By Lemma~\ref{lem:alpha},
$\alpha(\P',a,s') \ge \alpha(\boldsymbol{p'},a,s')$.
Thus
$\alpha(\boldsymbol{p'},a,s') + \alpha(\C\cup(\A\setminus\A'),s',b) \le v$.
Similarly, if $(a,b) \in \B'\setminus\B$,
then
$\alpha(\boldsymbol{q'},a,s') + \alpha(\D\cup(\B\setminus\B'),s',b) \le v$.

Thus conditions 1 through 5 are all satisfied.

\bigskip
We next prove the reverse implication.
Suppose that for some $s'$, $0 \le s' < s$,
there exist a pair of $s'$-monotonic sequences $(\boldsymbol{p'},\boldsymbol{q'})$
and a $2$-partition $(\A',\B')$ of $\K_{s'}$,
such that conditions 1 through 5 are satisfied.

By condition~1, there exists a good basic $2$-partition
$(\P',\Q')$ of $\J(0,s')$,
such that
$(\boldsymbol{p'},\boldsymbol{q'})$ encodes $(\P',\Q')$,
$\P'\mid\P'\cup\A'$,
$\Q'\mid\Q'\cup\B'$,
and
$(s'-1,s')\in\Q'$ when $s' > 0$.

By condition~2,
$(\A',\B') \simeq (\A,\B)$.
Since $\J(0,s')\cap\K_{s'} = \emptyset$,
we have $(\P',\Q') \simeq (\A',\B')$,
and hence $(\P'\cup\A',\Q'\cup\B') \succ \J(0,s')\cup\K_{s'}$.
Also, since
$\J(0,s')\cap\K_s = \emptyset$,
we have
$(\P',\Q') \simeq (\A,\B)$.
Thus
$(\P',\Q')$,
$(\A',\B')$,
and
$(\A,\B)$
are pairwise consistent.
Moreover,
$(\C,\D)$ is consistent with $(\P',\Q')$, $(\A',\B')$, and $(\A,\B)$
because
$\J(s',s)$ is disjoint from $\J(0,s')$, $\K_{s'}$, and $\K_s$.
Thus
$(\P',\Q')$,
$(\A',\B')$,
$(\A,\B)$,
and
$(\C,\D)$
are pairwise consistent.

Let
$\P = \P'\cup(\A'\setminus\A)\cup\C$
and
$\Q = \Q'\cup(\B'\setminus\B)\cup\D$.
Then $(\P,\Q)$ is a $2$-partition of
$\J(0,s) = \J(0,s') \cup (\K_{s'}\setminus\K_s) \cup \J(s',s)$,
and is consistent with
$(\P',\Q')$,
$(\A',\B')$,
$(\A,\B)$,
and
$(\C,\D)$.
Moreover, $(\P,\Q)$ is basic,
with $(s',s)$ as the rightmost maximal vertebrate range
and with $(s-1,s)\in\P$,
because $(\P',\Q')$ is basic, $(\C,\D)$ is simple,
$\U(s',s)\subseteq\C\subseteq\P$,
and
$(s'-1,s')\in\Q'\subseteq\Q$ when $s' > 0$.

By Lemma~\ref{lem:ef},
$(\P,\Q)$ is encoded by
a pair of $s$-monotonic sequences
that extends
$(\boldsymbol{p'},\boldsymbol{q'})$ by
$(\E,\F) = (\A'\setminus\A,\B'\setminus\B) \succ \K_{s'}\setminus\K_s$.
Since 
any pair $(\boldsymbol{p},\boldsymbol{q})$
that
extends
$(\boldsymbol{p'},\boldsymbol{q'})$
by $(\E,\F)$
is uniquely determined by
$(\boldsymbol{p'},\boldsymbol{q'})$
and $(\E,\F)$,
$(\P,\Q)$ must be encoded by
the pair $(\boldsymbol{p},\boldsymbol{q})$
that extends
$(\boldsymbol{p'},\boldsymbol{q'})$ by $(\E,\F)$
in condition~3.

We next show that
$\P \mid \P\cup\A$
and
$\Q \mid \Q\cup\B$.
Note that
$\J(0,s) = \J(0,s')\cup(\K_{s'}\setminus\K_{s})\cup\J(s',s)$.
The three subfamilies
$\J(0,s')$,
$\K_{s'}\setminus\K_s$,
and
$\J(s',s)$
have neighborhoods
$\J(0,s')\cup\K_{s'}$,
$\J(0,s)\cup\K_s$,
and
$\K_{s'}\cup\J(s',s)\cup\K_s$,
respectively.

For $\J(0,s')$ and its neighborhood
$\J(0,s')\cup\K_{s'}$,
condition~1 guarantees that
$\P' \mid \P'\cup\A'$
and
$\Q' \mid \Q'\cup\B'$.

For
$\J(s',s)$
and its neighborhood
$\K_{s'}\cup\J(s',s)\cup\K_s$,
condition~4 guarantees that
$\C \mid \A'\cup\C\cup\A$
and
$\D \mid \B'\cup\D\cup\B$.

For
$\K_{s'}\setminus\K_s$
and its neighborhood
$\J(0,s)\cup\K_s$,
recall that by condition~1,
we have either $s' = 0$ or $(s'-1,s')\in\Q'$.
If $s'=0$, then $\K_{s'}\setminus\K_s$ is empty.
Otherwise, we have $(s'-1,s')\in\Q'\subseteq\Q\cup\B$
and $(s',s'+1)\in\C\subseteq\P\cup\A$
for the $2$-partition $(\P\cup\A,\Q\cup\B) \succ \J(0,s)\cup\K_s$.

Consider any interval $(a,b) \in \K_{s'}\setminus\K_s$,
where $0 \le a < s' < b \le s$.
If $(a,b) \in \A'\setminus\A$,
then by Lemma~\ref{lem:cut}
we have
$$
\alpha(\P\cup\A,a,b) = \alpha(\P',a,s') + \alpha(\C\cup(\A\setminus\A'),s',b).
$$
Condition~5 guarantees that
$$
\alpha(\boldsymbol{p'},a,s')
\le \alpha(\boldsymbol{p'},a,s') + \alpha(\C\cup(\A\setminus\A'),s',b) \le v.
$$
By Lemma~\ref{lem:alpha},
$\alpha(\P',a,s') = \alpha(\boldsymbol{p'},a,s')$
whenever $\alpha(\boldsymbol{p'},a,s') \le v$.
Thus
$$
\alpha(\P\cup\A,a,b) = \alpha(\P',a,s') + \alpha(\C\cup(\A\setminus\A'),s',b)
= \alpha(\boldsymbol{p'},a,s') + \alpha(\C\cup(\A\setminus\A'),s',b) \le v.
$$
Similarly, if $(a,b) \in \B'\setminus\B$,
then
$$
\alpha(\Q\cup\B,a,b) = \alpha(\Q',a,s') + \alpha(\D\cup(\B\setminus\B'),s',b)
= \alpha(\boldsymbol{q'},a,s') + \alpha(\D\cup(\B\setminus\B'),s',b) \le v.
$$
Thus $\A'\setminus\A \mid \P\cup\A$
and
$\B'\setminus\B \mid \Q\cup\B$.

We have shown that
$\P \mid \P\cup\A$
and
$\Q \mid \Q\cup\B$.
In particular,
$\P \mid \P$
and
$\Q \mid \Q$,
and hence $(\P,\Q)$ is good.
In summary, we have a good basic $2$-partition $(\P,\Q)$ of $\J(0,s)$
such that
$(\boldsymbol{p},\boldsymbol{q})$ encodes $(\P,\Q)$,
$\P\mid\P\cup\A$,
$\Q\mid\Q\cup\B$,
and $(s-1,s)\in\P$.
Thus $Y(s, \boldsymbol{p}, \boldsymbol{q}, \A, \B) = \mathrm{true}$,
as desired.
\end{proof}

\subsection{Time complexity}

We say that two intervals in $\J$ \emph{overlap significantly}
if the length of their intersection is at least $2v+1$.
Construct a graph $H$ with vertex set $\J$ such that two vertices in $H$
are connected by an edge if and only if the corresponding two intervals in $\J$
overlap significantly.
Compute connected components in $H$,
and partition $\J$ into \emph{groups} accordingly.
We say that a $2$-partition of $\J$ is
\emph{group-conforming}
if no two intervals from the same group are put in different parts.

\begin{lemma}\label{lem:group}
Any good $2$-partition of $\J$ must be group-conforming.
\end{lemma}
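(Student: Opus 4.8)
The plan is to first reduce the statement to a pairwise claim, and then establish that claim by a pigeonhole argument that exploits the backbone. The reduction is immediate: the groups are precisely the connected components of $H$, so any two intervals in the same group are joined by a path in $H$, each of whose edges connects two significantly overlapping intervals. Hence it suffices to show that in any good $2$-partition $(\P,\Q)$ of $\J$, any two intervals that overlap significantly lie in the same part; group-conformance then follows by transitivity along such paths.

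For the pairwise claim, I would take two intervals $I = (a,b)$ and $I' = (a',b')$ in $\J$ that overlap significantly, so that the intersection $(\max\{a,a'\},\min\{b,b'\})$ has length $\min\{b,b'\}-\max\{a,a'\} \ge 2v+1$. Since all endpoints are integers, this intersection contains at least $2v+1$ of the backbone unit intervals $(i-1,i)$, namely those with $\max\{a,a'\}+1 \le i \le \min\{b,b'\}$; these are pairwise disjoint and each is contained in $I\cap I'$, hence intersects both $I$ and $I'$. Because $2v+1 \ge 3 > 1$, neither $I$ nor $I'$ can itself be a unit interval, so all of these backbone intervals are distinct from both $I$ and $I'$.

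Now suppose, for contradiction, that $I\in\P$ while $I'\in\Q$. The at least $2v+1$ backbone intervals inside $I\cap I'$ are distributed between $\P$ and $\Q$, so by pigeonhole at least $v+1$ of them lie in a single part. If at least $v+1$ lie in $\P$, then $I\in\P$ intersects these $v+1$ pairwise disjoint intervals of $\P$, all distinct from $I$, so $I$ is the center of an induced $K_{1,v+1}$ in $\P$, contradicting $\P\mid\P$. Symmetrically, if at least $v+1$ lie in $\Q$, then $I'$ is the center of an induced $K_{1,v+1}$ in $\Q$, contradicting $\Q\mid\Q$. Either way the goodness of $(\P,\Q)$ is violated, so $I$ and $I'$ must lie in the same part, which proves the claim.

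The argument is short, and the only point that requires care — the step I would treat as the main obstacle — is the guarantee that a significant overlap actually contains enough backbone unit intervals to serve as genuine claw leaves, that is, intervals that are pairwise disjoint and distinct from the two prospective centers $I$ and $I'$. This is exactly where the threshold $2v+1$, rather than $v+1$, is used: the overlap must be long enough that, after its backbone intervals are split between the two parts, one part is still left holding $v+1$ of them, which is what forces the forbidden star in that part.
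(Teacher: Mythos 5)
Your proof is correct and follows essentially the same route as the paper's: reduce group-conformance to the pairwise claim for significantly overlapping intervals, then use the $\ge 2v+1$ backbone unit intervals in the intersection and pigeonhole to force $v+1$ disjoint leaves in one part, yielding a forbidden $K_{1,v+1}$. The only difference is that you explicitly verify the leaves are distinct from the centers (since the centers have length $\ge 2v+1 > 1$), a detail the paper leaves implicit.
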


\begin{proof}
It suffices to show that any two intervals that overlap significantly
must be in the same part.
Let $A$ and $B$ be two intervals in $\J$ such that the length $\ell$
of their intersection $A \cap B$ is at least $2v+1$.
Then $A \cap B$ contains $\ell \ge 2v+1$ disjoint unit intervals in $\U\subseteq\J$.
If $A$ and $B$ are in two different parts of a $2$-partition of $\J$,
then either $A$ or $B$ would contain at least $\lceil (2v+1)/2 \rceil = v+1$
disjoint unit intervals in the same part,
forming a star $K_{1,v+1}$.
\end{proof}

\begin{lemma}\label{lem:pierce}
Any integer point of the line is contained in intervals from at most
$2v^2 + v$ groups.
\end{lemma}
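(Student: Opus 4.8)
The plan is to reduce the statement to a purely combinatorial packing bound and then establish that bound directly. Fix an integer point $x$ on the line, and let $g_1,\dots,g_t$ be the distinct groups that contain at least one interval of $\J$ whose open interval contains $x$ in its interior; for each such group I would pick one representative interval $I_j \in g_j$ with $x$ in its interior. Since two intervals lie in the same group whenever they overlap significantly (an edge of $H$ places them in the same connected component), and $I_1,\dots,I_t$ lie in pairwise distinct groups, no two representatives overlap significantly, so the length of $I_i \cap I_j$ is at most $2v$ for all $i \ne j$. Because each $I_j$ contains the integer $x$ in its interior, I would write $I_j = (x - L_j,\, x + R_j)$ with positive integers $L_j, R_j \ge 1$. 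Two such intervals meet exactly in $(x - \min(L_i,L_j),\, x + \min(R_i,R_j))$, so the length of $I_i \cap I_j$ equals $\min(L_i,L_j) + \min(R_i,R_j)$. Thus it suffices to bound $t$ subject to the pairwise constraint $\min(L_i,L_j) + \min(R_i,R_j) \le 2v$, where the pairs $(L_j,R_j)$ are distinct pairs of positive integers, distinct because an interval lies in exactly one group, so equal coordinates would force the same interval in the same group.

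The heart of the argument is the following packing bound: any set of distinct pairs of positive integers $(L_j,R_j)$ with $\min(L_i,L_j)+\min(R_i,R_j)\le 2v$ for all $i\ne j$ has size at most $2v^2+v$. First I would observe that at most one pair can have $L_j \ge 2v$, since two such pairs $i,j$ would give $\min(L_i,L_j)\ge 2v$ together with $\min(R_i,R_j)\ge 1$, a contradiction. Next, for each fixed $\ell$ I would bound the number of pairs with $L_j=\ell$: listing their (necessarily distinct) second coordinates as $r_1<\dots<r_k$, every pair $(r_a,r_k)$ with $a<k$ forces $\ell + r_a \le 2v$, so $r_1,\dots,r_{k-1}$ are distinct integers in $\{1,\dots,2v-\ell\}$ and hence $k \le 2v-\ell+1$. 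Summing over $\ell=1,\dots,2v-1$ gives $\sum_{\ell=1}^{2v-1}(2v-\ell+1)=\sum_{k=2}^{2v}k = v(2v+1)-1 = 2v^2+v-1$ pairs with $L_j \le 2v-1$, and adding the at most one pair with $L_j \ge 2v$ yields the bound $2v^2+v$. Combining the two parts gives $t \le 2v^2+v$, which is exactly the assertion.

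The main obstacle is pinning the constant down to exactly $2v^2+v$ rather than one more: a naive per-$\ell$ count summed over all $\ell \ge 1$, plus a separately counted large-$L$ pair, over-counts, so the clean dichotomy between pairs with $L_j \le 2v-1$ (handled by the summation) and a single pair with $L_j \ge 2v$ is the crucial bookkeeping step. The roles of the coordinates $L_j$ and $R_j$ are symmetric, and I would take care to note that the reduction never needs the representatives to be chosen canonically—any interval of the group through $x$ serves—since the only property used is that representatives of different groups fail to overlap significantly.
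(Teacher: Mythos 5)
Your proof is correct, and it reaches the bound by a genuinely different argument than the paper's. You reduce to an extremal problem: one representative interval per group through $x$, with pairwise intersection length at most $2v$ (since representatives of distinct groups cannot overlap significantly), which becomes the arithmetic constraint $\min(L_i,L_j)+\min(R_i,R_j)\le 2v$ on the extent pairs $(L_j,R_j)$; you then solve this by a dichotomy on the left extent (at most one representative with $L_j\ge 2v$, and at most $2v-\ell+1$ with $L_j=\ell$ for each $\ell\le 2v-1$), and your summation $\sum_{\ell=1}^{2v-1}(2v-\ell+1)=2v^2+v-1$ plus the one exceptional representative gives exactly $2v^2+v$. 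The paper instead dichotomizes on total interval length: each interval of length at least $2v+1$ through $x$ is assigned a \emph{core} of length exactly $2v+1$ through $x$; intervals sharing a core overlap significantly and hence lie in the same group, and there are only $2v$ possible cores, so at most $2v$ groups of long intervals; the short intervals (length at most $2v$) through $x$ are simply counted individually, $\sum_{\ell=1}^{2v}(\ell-1)=2v^2-v$ of them, with no pairwise reasoning at all, giving $2v+(2v^2-v)=2v^2+v$. The paper's proof is shorter because the group structure is only exploited for long intervals; your proof uses the non-overlap hypothesis uniformly and isolates a self-contained packing lemma about integer pairs, at the cost of the more delicate bookkeeping you describe. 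One point worth making explicit: the distinctness of your pairs $(L_j,R_j)$ (equivalently, that distinct groups yield distinct intervals) relies on the paper's standing assumption, made early in Section~3, that $\J$ contains no duplicate intervals --- two duplicate short intervals would not overlap significantly and could a priori sit in different groups; the paper's count of short intervals quietly relies on the same assumption, so the two proofs are on equal footing here.
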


\begin{proof}
Fix any point $x$.
For each interval of length at least $2v+1$ that contains $x$,
let its \emph{core} be any subinterval of length exactly $2v+1$ that still contains $x$.
If the cores of two intervals coincide,
then they overlap significantly and hence belong to the same group.
Thus the number of groups with at least one interval of length at least $2v+1$
containing $x$
is at most the number of distinct intervals of length exactly $2v+1$ containing $x$,
which is $2v$.

On the other hand, for each $\ell$, $1 \le \ell \le 2v$,
there are $\ell-1$ distinct intervals of length $\ell$ containing $x$.
In total,
there are at most $(2v)(2v-1)/2 = 2v^2 - v$ such intervals,
and hence there are at most $2v^2 - v$ distinct groups including such intervals in $\J$.

In summary,
the point $x$ is contained in intervals from at most
$2v + 2v^2-v = 2v^2 + v$ groups.
\end{proof}

Note that the number $m$ of maximal cliques is at most the number $n$ of vertices
in vertebrate interval graphs.
For each $s$, $1 \le s \le m \le n$,
there are $n^{O(v)}$
pairs of $s$-monotonic sequences $(\boldsymbol{p},\boldsymbol{q})$.
By Lemma~\ref{lem:pierce},
each $\K_s$ includes intervals from $O(v^2)$ groups,
and hence has $2^{O(v^2)}$ group-conforming $2$-partitions $(\A,\B)$.
For two tuples
$(s,\boldsymbol{p},\boldsymbol{q},\A,\B)$
and $(s',\boldsymbol{p'},\boldsymbol{q'},\A',\B')$,
all conditions in Lemma~\ref{lem:dp} can be checked in $n^{O(1)}$ time.
Thus the running time of our dynamic programming algorithm
is $2^{O(v^2)}n^{O(v)}$.
This completes the proof of Theorem~\ref{thm:2partition}.

\section{Open questions}

What is the minimum number $k$ such that
every interval graph with $n$ vertices admits a vertex partition
into $k$ induced subgraphs that are vertebrate interval graphs?
Is there a polynomial-time algorithm for partitioning an interval graph
into the minimum number of vertebrate interval graphs?

\end{document}